\documentclass[11pt]{article}
\usepackage[a4paper,margin=26mm]{geometry}
\usepackage[T1]{fontenc}
\usepackage[utf8]{inputenc}
\usepackage{lmodern}
\usepackage{amsmath,amssymb,amsthm,mathtools}
\usepackage{booktabs,longtable,array,tabularx}
\usepackage{enumitem}
\usepackage{xcolor}
\usepackage{hyperref}
\usepackage{tikz}
\usetikzlibrary{arrows.meta,positioning,calc}
\hypersetup{colorlinks=true,linkcolor=blue!70!black,urlcolor=blue!70!black,citecolor=blue!70!black}

\newtheorem{theorem}{Theorem}
\newtheorem{lemma}{Lemma}
\newtheorem{proposition}{Proposition}
\newtheorem{corollary}{Corollary}
\theoremstyle{definition}
\newtheorem{definition}{Definition}

\newcommand{\Cay}{\operatorname{Cay}}

\tikzset{boxnode/.style={draw, rounded corners=2pt, align=center, font=\small, inner sep=5pt},
         labnode/.style={font=\small, align=center},
         flow/.style={-Latex, semithick}}

\title{Hamilton decompositions of the directed 7-torus at odd modulus via root-flat certificates and a prefix-count construction}
\author{Sanghyun Park\\Yonsei University}
\date{May 2026}

\begin{document}
\maketitle

\begin{abstract}
We prove that the directed seven-dimensional equal-side torus
\[
D_7(m)=\Cay\bigl((\mathbb Z_m)^7,\{e_0,e_1,\ldots,e_6\}\bigr)
\]
has a directed Hamilton decomposition for every odd integer \(m\ge3\).  The proof has two
main contributions.  First, we introduce the \emph{root-flat certificate}: a named verification framework in which a Hamilton
decomposition follows from three conditions on a single root flat, namely row Latinness,
layer bijectivity, and primitive return maps.  This abstraction was used informally in the earlier odd
\(D_5(m)\) construction; here it appears as Definition~\ref{def:root-flat-certificate} and Theorem~\ref{thm:certificate-implies}.  Second, for every odd
\(m\ge7\), we give a uniform prefix-coordinate construction: one-layer prefix maps, a symbol-count
criterion, and explicit count matrices produce all seven Hamilton factors without a finite search.
The remaining moduli \(m=3,5\) are exactly the boundary where this prefix-count method cannot
work; they are handled by finite root-flat certificates whose validity is checked in Lean 4.  A Lean 4 formalization verifies the Cayley statement, with the symbolic branch and the finite boundary certificates checked in the same development.
\end{abstract}

\section{Introduction}

A Hamilton decomposition of a finite digraph is a partition of its arc set into directed Hamilton
cycles.  For the equal-side directed torus
\[
D_n(m)=\Cay\bigl((\mathbb Z_m)^n,\{e_0,e_1,\ldots,e_{n-1}\}\bigr),
\]
each vertex has outdegree \(n\).  A Hamilton decomposition, if it exists, consists of exactly
\(n\) arc-disjoint directed Hamilton cycles whose union is the full Cayley arc set.  Hamiltonicity of Cartesian products of directed cycles has a classical history.  For two directed cycles,
Trotter--Erd\H{o}s gave an arithmetic criterion for the existence of a Hamilton cycle, while
Curran--Witte analyzed Hamilton paths in two-factor products and proved that products of three or more
nontrivial directed cycles are Hamiltonian \cite{TrotterErdos,CurranWitte}.  This places the equal-side
directed torus within the standard Hamiltonicity theory of abelian Cayley digraphs; see also the surveys
of Witte--Gallian and Curran--Gallian \cite{WitteGallian,CurranGallian}.

The problem considered here is stronger.  A Hamilton decomposition asks for a partition of the entire
arc set into Hamilton cycles, not merely for one spanning directed cycle.  General Hamilton-decomposition
questions are surveyed by Alspach--Bermond--Sotteau; product decompositions and related directed product
phenomena appear in work of Foregger, Keating, Bogdanowicz, and Darijani--Miraftab--Witte Morris
\cite{AlspachBermondSotteau,Foregger,Keating,Bogdanowicz2017,Bogdanowicz2020,DarijaniMiraftabMorris}.

The directed \(3\)-torus is now known for all \(m\ge3\), by return-map and odometer methods
\cite{ParkD3}; independent constructions and related explorations were also given by Knuth and by
Aquino--Michaels \cite{KnuthClaude,AquinoMichaels}.  The odd \(D_5(m)\) proof used a case-specific zero-set Latin table: one works on a root flat, uses the table for the only nonconstant
layer, certifies the matching property by a finite exact-cover table, and then proves that a normalized
return map is primitive \cite{ParkD5Odd}.  That proof introduced the certificate philosophy, but
it did not isolate the certificate theorem or a uniform large-modulus construction.

The present paper contributes both.  The first contribution is the \emph{root-flat certificate}, introduced here as a named framework in Definition~\ref{def:root-flat-certificate} and Theorem~\ref{thm:certificate-implies}.  It separates local certificate conditions from the lift to the Cayley graph: once row Latinness, layer bijectivity, and primitive return maps hold on one root flat, the full Cayley decomposition follows.  This framework provides a common verification interface for the present construction and for the earlier odd \(D_5(m)\) construction.

The second contribution is the uniform prefix-count construction for dimension \(7\).  For all
odd \(m\ge7\), we construct the seven Hamilton factors by a symbolic prefix-coordinate family and
explicit \(7\times7\) count matrices.  This is a uniform construction for the large-modulus range, in contrast to the case-specific zero-set table used in the odd \(D_5\) proof.  The two remaining odd moduli \(m=3\)
and \(m=5\) are not merely inconvenient exceptions: Proposition~\ref{prop:boundary-obstruction}
shows that the prefix-count method cannot possibly treat them.  We therefore supply finite
root-flat certificates for these two boundary cases, with the full tables stored as ancillary Lean
source artifacts.

\begin{theorem}[Main theorem]\label{thm:main}
For every odd integer \(m\ge3\), the directed torus
\[
D_7(m)=\Cay\bigl((\mathbb Z_m)^7,\{e_0,e_1,\ldots,e_6\}\bigr)
\]
admits a directed Hamilton decomposition.
\end{theorem}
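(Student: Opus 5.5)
The plan is to reduce Theorem~\ref{thm:main} to the root-flat certificate framework and then supply certificates in two regimes. The reduction works through the layer function \(S(x)=x_0+\cdots+x_6 \bmod m\). Every generator \(e_i\) raises \(S\) by one, so a Hamilton decomposition can be encoded as a direction assignment: at each vertex, a permutation \(\pi_x\in\mathrm{Sym}(7)\) sending colour \(c\) to the direction used by factor \(c\) at \(x\).

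Row Latinness, i.e. each \(\pi_x\) being a permutation, guarantees that the seven colour classes partition the arc set. Layer bijectivity, i.e. each colour's step map sends layer \(S=t\) bijectively onto layer \(S=t+1\), makes each colour class a union of disjoint cycles. Composing the \(m\) layer steps gives a return map on the root flat \(\{S=0\}\cong(\mathbb Z_m)^6\). The class is then a single Hamilton cycle exactly when this return map is one cycle on \(m^6\) points, which is the "primitive return map" condition. This is the content of Definition~\ref{def:root-flat-certificate} and Theorem~\ref{thm:certificate-implies}, which I take as given. So it suffices to exhibit, for each odd \(m\), a certificate satisfying the three conditions.

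For odd \(m\ge7\), I would use the prefix-coordinate family. All layers except one are constant: there colour \(c\) moves in direction \(c\), so it acts as a pure translation. The single nonconstant layer uses a map that depends only on prefix counts of the coordinates, with choices recorded in a \(7\times7\) count matrix \(N\). Entry \(N_{c,i}\) records how many times colour \(c\) uses direction \(i\) along one return.

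Latinness on the special layer reduces to a symbol-count criterion, and each prefix map being a one-layer bijection gives layer bijectivity. Each return map then becomes an odometer-like map: a translation by the column sums of \(N\), twisted by carries in the prefix coordinates. Primitivity should follow from a standard odometer lemma, as in the \(D_3\) and \(D_5\) arguments. That lemma says the map is a single \(m^6\)-cycle if the translation in the lowest coordinate is coprime to \(m\) and each successive carry increments the next coordinate by a unit mod \(m\).

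Concretely, I would pick \(N\) with rows and columns summing to \(m\), and with entries small integers such as \(1,2,m-6,\dots\) arranged so that the relevant partial sums are \(\pm1\) or \(2\). These are units because \(m\) is odd. The condition \(m\ge7\) is exactly what makes the needed nonnegative entries like \(m-6\) available.

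The remaining moduli \(m=3,5\) cannot be handled this way, by Proposition~\ref{prop:boundary-obstruction}. For these I would invoke explicit finite root-flat certificates: tables of permutations on the root flat of size \(3^6\) or \(5^6\). Their three conditions are checked mechanically, as in the Lean 4 development. That verification is a finite computation, not a proof idea.

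The main obstacle is choosing the count matrices so that all seven return maps are simultaneously primitive for every odd \(m\ge7\), uniformly in \(m\). Latinness and bijectivity are local and easy, but primitivity requires the carry structure of all seven odometers to have unit increments at once. I would first try a circulant-like \(N\) with one special large entry \(m-6\) per row. I would compute each return map's induced map on the first coordinate and on the carry chain symbolically, and adjust entries until every relevant increment is \(\pm1\) or \(\pm2\).
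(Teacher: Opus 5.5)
\textbf{There is a genuine gap in the \(m\ge7\) branch.} Your reduction to Theorem~\ref{thm:certificate-implies}, your treatment of \(m=3,5\) by machine-checked finite certificates, and the resulting case split all agree with the paper's proof of Theorem~\ref{thm:main}. The problem is the \(m\ge7\) construction you sketch: it is not the paper's prefix-count family, and as described it does not work.

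You make every layer but one constant, with colour \(c\) stepping by \(q_c\), and put all the structure into one special layer \(F_c(w)=w+q_{d(w,c)}\). Then along a return colour \(c\) takes \(m-1\) steps in direction \(c\) and one state-dependent step. So there is no \(7\times7\) colour-by-direction count matrix with row and column sums \(m\) and entries like \(m-6\). The bookkeeping you propose does not describe your own design.

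What you would actually face is the \(D_5\)-style problem: find a Latin, bijective special layer such that \(F_c\) composed with translation by \(-q_c\) is an \(m^6\)-cycle for all seven \(c\) at once. For \(c=6\), since \(q_6=0\), \(F_6\) itself must be an \(m^6\)-cycle. Nothing in your proposal addresses this. You also attribute Latinness to a ``symbol-count criterion''. Latinness is a pointwise condition at each state; in the paper the count criterion governs primitivity, not Latinness.

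The paper's argument rests on three ingredients that are missing from your proposal.
\begin{enumerate}[label=(\roman*)]
\item The seven state-dependent maps \(M_\tau^\xi\), \(\xi\in\Sigma_6\), built from the first-occurrence index \(\rho_\tau\). By Lemma~\ref{lem:prefix-layer}, at every state they use the labels \(0,\dots,6\) exactly once, and each is a bijection of \(Q_6\). So every layer is nonconstant, yet each layer is specified by a state-independent bijection from colours to symbols.
\item The count matrix is colour-by-\emph{symbol}, not colour-by-direction. Its row and column sums are \(m\) because each layer is a perfect matching, and K\H{o}nig's theorem turns any such matrix into an \(m\)-layer schedule.
\item Lemma~\ref{lem:count-criterion}. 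Projecting \(Q_r\to Q_{r-1}\) turns the top symbol into a full translation. Each symbol occurrence is preceded by a bijection, so it sees every base state exactly once. Hence the fibre drift is \((N_r-N_\Delta)(m-1)^{r-1}\), and primitivity depends only on the counts, not on the order of layers or the thresholds.
\end{enumerate}

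Your ``odometer lemma'' points at this skew-product induction; in the paper's setting your lowest-coordinate condition becomes \(\gcd(N_0,m)=1\). But without concrete maps you cannot compute the carries, and you supply no matrices. The paper gives them for \(m=7\) and for \(m\equiv1,3,5\pmod{6}\).

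Finally, the restriction \(m\ge7\) does not come from the availability of entries like \(m-6\). It comes from needing \(N_0\ge1\) in all seven rows while the \(0\)-column sums to \(m\) (Proposition~\ref{prop:boundary-obstruction}).
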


\begin{figure}[t]
\centering
\begin{tikzpicture}[node distance=9mm and 12mm]
\node[boxnode, text width=0.31\textwidth] (u) {\textbf{Uniform branch}\\odd \(m\ge7\)\\Section~\ref{sec:prefix-count}: prefix coordinates,\\one-layer factorization, count matrices};
\node[boxnode, text width=0.31\textwidth, right=22mm of u] (b) {\textbf{Boundary branch}\\\(m=3,5\)\\Section~\ref{sec:exceptional}: finite root-flat certificates\\verified in Lean 4};
\node[boxnode, text width=0.42\textwidth, below=13mm of $(u)!0.5!(b)$] (c) {\textbf{Root-flat certificate theorem}\\Theorem~\ref{thm:certificate-implies}: a valid certificate yields a Hamilton decomposition};
\node[boxnode, text width=0.42\textwidth, below=12mm of c] (main) {\textbf{Main theorem}\\Theorem~\ref{thm:main}: \(D_7(m)\) has a Hamilton decomposition\\for every odd \(m\ge3\)};
\draw[flow] (u.south) -- (c.north west);
\draw[flow] (b.south) -- (c.north east);
\draw[flow] (c.south) -- (main.north);
\end{tikzpicture}
\caption{Logical structure of the proof. The symbolic \(m\ge7\) branch is proved in the paper; the boundary branch \(m=3,5\) is established by finite root-flat certificates checked in Lean 4.}
\label{fig:proof-overview}
\end{figure}

Figure~\ref{fig:proof-overview} summarizes the division between the uniform symbolic branch and the two finite boundary branches.

A separate diagonal Cartesian-power lift gives the multiplicative closure
\[
D_a(m),\ D_b(m^a)\quad\Longrightarrow\quad D_{ab}(m),
\]
for Hamilton decompositions of equal-side directed tori \cite{ParkComposite}.  That mechanism is
useful for composite dimensions but does not produce prime-dimensional base cases such as
\(D_5\) or \(D_7\).  The proof below is therefore a new prime-dimensional base construction rather
than a consequence of the composite lift.

\section{The root-flat certificate framework}\label{sec:root-flat-framework}

Let \(n\ge2\) and \(m\ge2\).  Define the root flat
\[
A_{n,m}=\left\{w=(w_0,\ldots,w_{n-1})\in(\mathbb Z_m)^n:\sum_{i=0}^{n-1}w_i=0\right\}.
\]
For \(0\le i\le n-2\), put
\[
q_i=e_i-e_{n-1},
\]
and put \(q_{n-1}=0\).  Thus \(q_i\in A_{n,m}\), and \(|A_{n,m}|=m^{n-1}\).

On the original torus define the layer function
\[
\sigma(x)=x_0+x_1+\cdots+x_{n-1}\pmod m,
\qquad X_t=\{x:\sigma(x)=t\}.
\]
The map
\[
\iota_t:X_t\longrightarrow A_{n,m},\qquad \iota_t(x)=x-t e_{n-1}
\]
is a bijection.  If \(x\in X_t\), then using generator \(e_i\) sends \(x\) to \(x+e_i\in X_{t+1}\),
and in root-flat coordinates this adds
\[
\iota_{t+1}(x+e_i)-\iota_t(x)=e_i-e_{n-1}=q_i.
\]
For \(i=n-1\), this is \(q_{n-1}=0\).

We now introduce the root-flat certificate.  It is the verification interface used throughout this paper: the local data live on \(A_{n,m}\), while Theorem~\ref{thm:certificate-implies} lifts those data to a Hamilton decomposition of the Cayley digraph.

\begin{definition}[Root-flat certificate]\label{def:root-flat-certificate}
An \((n,m)\)-root-flat certificate consists of functions
\[
d_t:A_{n,m}\times\mathbb Z_n\longrightarrow\mathbb Z_n,
\qquad t\in\mathbb Z_m,
\]
where \(d_t(w,\kappa)\) is the direction used by color \(\kappa\) at layer \(t\) and root-flat point
\(w\).  It is valid if the following hold.
\begin{enumerate}[label=(RF\arabic*)]
\item \textbf{Row Latinness.}  For every \(t\in\mathbb Z_m\) and \(w\in A_{n,m}\), the map
\[
\kappa\longmapsto d_t(w,\kappa)
\]
is a permutation of \(\mathbb Z_n\).
\item \textbf{Layer bijectivity.}  For every \(t\in\mathbb Z_m\) and \(\kappa\in\mathbb Z_n\), the map
\[
P_{t,\kappa}:A_{n,m}\to A_{n,m},
\qquad P_{t,\kappa}(w)=w+q_{d_t(w,\kappa)}
\]
is a bijection.
\item \textbf{Primitive return.}  For every \(\kappa\in\mathbb Z_n\), the return map
\[
R_\kappa=P_{m-1,\kappa}\cdots P_{1,\kappa}P_{0,\kappa}
\]
is a single cycle on \(A_{n,m}\).
\end{enumerate}
\end{definition}

\begin{theorem}[Certificate implies Hamilton decomposition]\label{thm:certificate-implies}
A valid \((n,m)\)-root-flat certificate gives a directed Hamilton decomposition of \(D_n(m)\).
\end{theorem}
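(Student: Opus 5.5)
The plan is to lift the certificate data from the root flat to the torus layer by layer and then check the three required properties: each color is a permutation, the colors partition the arcs, and each color is a single cycle. For \(x\in X_t\) and color \(\kappa\), define the lifted direction
\[
D(x,\kappa)=d_t(\iota_t(x),\kappa),\qquad t=\sigma(x),
\]
and let color \(\kappa\) consist of the arcs \(x\to x+e_{D(x,\kappa)}\). Each vertex then has exactly one outgoing arc of each color.

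\textbf{Partition of the arc set.} By (RF1), at each vertex \(x\) the map \(\kappa\mapsto D(x,\kappa)\) is a permutation of \(\mathbb Z_n\). So every Cayley arc \(x\to x+e_i\) receives exactly one color, and the \(n\) color classes partition the full arc set.

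\textbf{Each color class is a permutation.} Fix \(\kappa\) and let \(f_\kappa(x)=x+e_{D(x,\kappa)}\). Then \(f_\kappa\) maps \(X_t\) to \(X_{t+1}\). By the displayed identity \(\iota_{t+1}(x+e_i)-\iota_t(x)=q_i\), we get the conjugation
\[
\iota_{t+1}\circ f_\kappa|_{X_t}\circ\iota_t^{-1}=P_{t,\kappa}.
\]
Since \(P_{t,\kappa}\) is a bijection by (RF2), each restriction \(f_\kappa|_{X_t}\) is a bijection \(X_t\to X_{t+1}\). Hence \(f_\kappa\) is a permutation of \((\mathbb Z_m)^n\), and color \(\kappa\) is a spanning 1-regular subdigraph, i.e.\ a disjoint union of directed cycles.

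\textbf{Hamiltonicity.} This is the only step needing care. Every orbit of \(f_\kappa\) passes through the layers \(X_0,X_1,\dots\) in cyclic order, so every cycle meets \(X_0\). The first-return map of \(f_\kappa\) to \(X_0\) is \(f_\kappa^m|_{X_0}\). Composing the conjugations gives
\[
\iota_0\circ f_\kappa^m|_{X_0}\circ\iota_0^{-1}=P_{m-1,\kappa}\cdots P_{0,\kappa}=R_\kappa .
\]
Here we use \(\iota_m=\iota_0\), which holds because \(t\) is read modulo \(m\) and \(me_{n-1}=0\). By (RF3), \(R_\kappa\) is one cycle of length \(m^{n-1}\) on \(A_{n,m}\).

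Now take any \(x\in X_0\). Its \(f_\kappa\)-orbit returns to \(X_0\) exactly every \(m\) steps, and the successive return points run through all of \(X_0\) before repeating. The orbit therefore has length \(m\cdot m^{n-1}=m^n\), so it is the whole vertex set. Since every cycle meets \(X_0\), there is exactly one cycle, and it is Hamiltonian. Doing this for each \(\kappa\) gives \(n\) arc-disjoint Hamilton cycles covering all arcs.
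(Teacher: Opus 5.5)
Your proof is correct and follows essentially the same route as the paper: you lift the certificate via \(\iota_t\), use (RF1) for the arc partition and (RF2) to make each color a \(1\)-factor, and identify the first-return map to a layer with \(R_\kappa\) so that (RF3) forces a single cycle of length \(m^n\). Your explicit conjugation \(\iota_{t+1}\circ f_\kappa|_{X_t}\circ\iota_t^{-1}=P_{t,\kappa}\) (with \(\iota_m=\iota_0\)), together with the observation that every orbit meets \(X_0\), makes the paper's remark that the cyclically rotated products are conjugate to \(R_\kappa\) precise and slightly simplifies it.
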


\begin{proof}
For color \(\kappa\), define the arc set
\[
E_\kappa=
\left\{\left(x,\ x+e_{d_{\sigma(x)}(\iota_{\sigma(x)}x,\kappa)}\right):x\in(\mathbb Z_m)^n\right\}.
\]
For fixed \(x\), condition (RF1) says that as \(\kappa\) varies, the directions used are all of
\(\mathbb Z_n\).  Therefore the arc sets \(E_\kappa\) partition the outgoing arcs at every vertex,
hence the full Cayley arc set.

Condition (RF2) gives one incoming arc and one outgoing arc at every vertex for each fixed color.
Thus each color class is a directed \(1\)-factor.  Since every arc increases \(\sigma\) by one, a color
orbit returns to the same root flat exactly once every \(m\) steps.  Starting from a different layer cyclically permutes the factors \(P_{t,\kappa}\).  By (RF2), each \(P_{t,\kappa}\) is a bijection, so the resulting cyclic products are conjugate to \(R_\kappa\) and have the same cycle structure.  The cycle structure of a color
class is therefore the cycle structure of the return map \(R_\kappa\), with all lengths multiplied by
\(m\).  By (RF3), \(R_\kappa\) is one cycle of length \(m^{n-1}\).  Hence the color class is one
directed Hamilton cycle of length \(m^n\).  Since this holds for every color, the partition is a
Hamilton decomposition.
\end{proof}

\section{The uniform prefix-count construction for \texorpdfstring{\(m\ge7\)}{m>=7}}\label{sec:prefix-count}

The symbolic branch of the \(D_7\) proof is most transparent in prefix coordinates.  Let
\[
Q_6=(\mathbb Z_m)^6.
\]
For \(0\le r\le6\), define
\[
p_r=(\underbrace{1,\ldots,1}_r,0,\ldots,0)\in Q_6.
\]
We now identify these prefix coordinates with the root flat of Section~\ref{sec:root-flat-framework}.  For
\(w=(w_0,\ldots,w_6)\in A_{7,m}\), define
\[
\Phi(w)_j=\sum_{h=7-j}^{6}w_h,
\qquad 1\le j\le6.
\]
Then \(\Phi:A_{7,m}\to Q_6\) is a bijection.  Its inverse is
\[
w_6=z_1,
\qquad w_i=z_{6-i+1}-z_{6-i}\quad(1\le i\le5),
\qquad w_0=-z_6,
\]
where \(z=\Phi(w)\).  Moreover, for the root-flat directions \(q_i=e_i-e_6\) for \(0\le i\le5\) and \(q_6=0\),
\[
\Phi(w+q_i)=\Phi(w)-p_{6-i}\quad(0\le i\le5),
\qquad
\Phi(w+q_6)=\Phi(w)-p_0.
\]
Thus prefix label \(r\) corresponds to Cayley direction \(e_{6-r}\).  In particular, once a layer map in prefix coordinates sends \(z\) to \(z-p_r\), the corresponding root-flat direction is \(6-r\).

\begin{figure}[t]
\centering
\begin{tikzpicture}[node distance=10mm and 18mm]
\node[boxnode, text width=0.37\textwidth, align=left] (root) {\textbf{Root-flat side}\\\(w=(w_0,\ldots,w_6)\in A_{7,m}\), \(\sum_i w_i=0\).\\[1mm]\(q_i=e_i-e_6\) for \(0\le i\le5\), and \(q_6=0\).};
\node[boxnode, text width=0.40\textwidth, align=left, right=24mm of root] (pref) {\textbf{Prefix-coordinate side}\\\(z=\Phi(w)\in Q_6\), where\\[1mm]\(z_j=\Phi(w)_j=\sum_{h=7-j}^{6} w_h\) for \(1\le j\le6\).\\[1mm]Inverse: \(w_6=z_1\), \(w_i=z_{7-i}-z_{6-i}\) for \(1\le i\le5\), and \(w_0=-z_6\).};
\draw[flow] (root.east) -- node[above, font=\small] {\(\Phi\)} (pref.west);
\node[labnode, text width=0.86\textwidth, align=center, anchor=north] (dict) at ($(root.south)!0.5!(pref.south)+(0,-12mm)$) {\(\Phi(w+q_i)=\Phi(w)-p_{6-i}\) for \(0\le i\le5\), and \(\Phi(w+q_6)=\Phi(w)-p_0\).\\Hence prefix label \(r\) corresponds to Cayley direction \(e_{6-r}\).};
\end{tikzpicture}
\caption{The root-flat/prefix-coordinate dictionary. The bijection \(\Phi\) converts root-flat directions \(q_i\) into prefix displacements \(-p_{6-i}\). This correspondence connects the certificate framework of Section~\ref{sec:root-flat-framework} to the prefix-count construction.}
\label{fig:phi-dictionary}
\end{figure}

\begin{table}[t]
\centering
\small
\begin{tabular}{cccc}
\toprule
Cayley generator & root-flat step & prefix label & prefix displacement\\
\midrule
\(e_0\) & \(q_0=e_0-e_6\) & \(6\) & \(-p_6\)\\
\(e_1\) & \(q_1=e_1-e_6\) & \(5\) & \(-p_5\)\\
\(e_2\) & \(q_2=e_2-e_6\) & \(4\) & \(-p_4\)\\
\(e_3\) & \(q_3=e_3-e_6\) & \(3\) & \(-p_3\)\\
\(e_4\) & \(q_4=e_4-e_6\) & \(2\) & \(-p_2\)\\
\(e_5\) & \(q_5=e_5-e_6\) & \(1\) & \(-p_1\)\\
\(e_6\) & \(q_6=0\) & \(0\) & \(-p_0=0\)\\
\bottomrule
\end{tabular}
\caption{Fixed dictionary between Cayley directions, root-flat directions, and prefix labels.}
\label{tab:prefix-dictionary}
\end{table}

We refer to Figure~\ref{fig:phi-dictionary} and Table~\ref{tab:prefix-dictionary} throughout for the correspondence between Cayley directions and prefix labels.  The symbols used in the construction below form a separate, state-dependent labeling.

\begin{table}[t]
\centering
\small
\begin{tabular}{cc}
\toprule
symbol \(\xi\) & prefix label used at a state \(z\), where \(r=\rho_\tau(z)\)\\
\midrule
\(0\) & \(0\)\\
\(\Delta\) & \(r\)\\
\(a,\;2\le a\le6\) & \(a\) if \(r<a\), and \(a-1\) if \(r\ge a\)\\
\bottomrule
\end{tabular}
\caption{State-dependent symbol-to-prefix-label assignment for the prefix-count construction.}
\label{tab:symbol-label}
\end{table}

No single Cayley direction corresponds to \(\Delta\) uniformly across states; instead, \(\Delta\) denotes the threshold-dependent slot, with label \(r=\rho_\tau(z)\) at the state \(z\).

For a threshold \(\tau\in\mathbb Z_m\) and \(z=(z_1,\ldots,z_6)\in Q_6\), define
\[
\rho_\tau(z)=
\begin{cases}
\min\{i:z_i=\tau\},&\text{if such }i\text{ exists},\\
6,&\text{otherwise}.
\end{cases}
\]
The color variable is denoted by \(\kappa\); the threshold variable is denoted by \(\tau\).  This avoids
using the same letter for two unrelated objects.

We use the symbol set
\[
\Sigma_6=\{0,\Delta,2,3,4,5,6\}.
\]
The symbol \(\Delta\) occupies the position that the integer label \(1\) would occupy among the numeric symbols; we name it separately because the corresponding map \(M_\tau^\Delta\) depends on the threshold \(\tau\) through \(\rho_\tau\), whereas the maps \(M_\tau^a\) for numeric \(a\) have a uniform parametric form.

For \(\xi\in\Sigma_6\), define maps \(M_\tau^\xi:Q_6\to Q_6\) as follows:
\[
M_\tau^0(z)=z,
\]
\[
M_\tau^\Delta(z)=z-p_{\rho_\tau(z)},
\]
and, for the numeric symbols \(2\le a\le6\),
\[
M_\tau^a(z)=
\begin{cases}
z-p_a,&\rho_\tau(z)<a,\\
z-p_{a-1},&\rho_\tau(z)\ge a.
\end{cases}
\]
Thus the superscript is either the formal symbol \(\Delta\) or one of the numeric labels \(\{0,2,3,4,5,6\}\); the
arithmetic in the definition appears only in the numeric cases.

\begin{lemma}[Prefix-coordinate one-layer factorization]\label{lem:prefix-layer}
For every \(m\), every threshold \(\tau\in\mathbb Z_m\), and every \(z\in Q_6\), the seven maps
\[
M_\tau^0,M_\tau^\Delta,M_\tau^2,M_\tau^3,M_\tau^4,M_\tau^5,M_\tau^6
\]
use the seven labels \(0,1,\ldots,6\) exactly once at \(z\), and each map is a bijection of \(Q_6\).
\end{lemma}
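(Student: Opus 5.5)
The plan is to treat the two assertions separately. The label assertion is a direct count. For bijectivity, the idea is to construct an explicit inverse for each \(M_\tau^\xi\), by showing that the relevant case distinction at \(z\) can be read off from the image \(z'=M_\tau^\xi(z)\) using the shifted threshold \(\tau-1\).

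\emph{Labels.} Since \(\rho_\tau(z)\) is either a minimum over indices \(1\le i\le6\) or equals \(6\), we always have \(r=\rho_\tau(z)\in\{1,\ldots,6\}\); in particular \(r\neq0\).

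By Table~\ref{tab:symbol-label}, symbol \(0\) uses label \(0\) and \(\Delta\) uses label \(r\). The numeric symbols split by comparison with \(r\).
\begin{itemize}
\item Those \(a\in\{2,\ldots,r\}\) (i.e.\ \(a\le r\)) use labels \(a-1\), which run through \(\{1,\ldots,r-1\}\).
\item Those \(a\in\{r+1,\ldots,6\}\) use labels \(a\), which run through \(\{r+1,\ldots,6\}\).
\end{itemize}
These five sets \(\{0\}\), \(\{r\}\), \(\{1,\ldots,r-1\}\), \(\{r+1,\ldots,6\}\) are disjoint and their union is \(\{0,\ldots,6\}\), so each label is used exactly once.

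\emph{Bijectivity.} \(M_\tau^0\) is the identity. For the other maps, the key observation is how subtracting \(p_s\) interacts with \(\rho\): it lowers \(z_1,\ldots,z_s\) by one and fixes the remaining coordinates.

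For \(M_\tau^\Delta\), write \(r=\rho_\tau(z)\) and \(z'=z-p_r\). For \(i<r\) we have \(z_i\neq\tau\), hence \(z'_i\neq\tau-1\). At index \(r\), either \(z_r=\tau\), in which case \(z'_r=\tau-1\), or \(r=6\) and no coordinate equals \(\tau\), in which case no \(z'_i\) equals \(\tau-1\). In both cases \(\rho_{\tau-1}(z')=r\). Therefore \(z'\mapsto z'+p_{\rho_{\tau-1}(z')}\) is a left inverse of \(M_\tau^\Delta\). The symmetric argument (adding \(p_s\) and comparing with threshold \(\tau\)) shows it is also a right inverse. Alternatively, a left inverse of a self-map of the finite set \(Q_6\) already forces bijectivity.

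For \(M_\tau^a\) with \(2\le a\le6\), I will show that \(\rho_{\tau-1}(z')<a\) if and only if \(\rho_\tau(z)<a\).
\begin{itemize}
\item If \(r=\rho_\tau(z)<a\), then \(z'=z-p_a\) lowers all coordinates up to and including index \(r\). So, as above, \(\rho_{\tau-1}(z')=r<a\).
\item If \(r\ge a\), then \(z'=z-p_{a-1}\) lowers only \(z_1,\ldots,z_{a-1}\). None of these equal \(\tau\), so none of \(z'_1,\ldots,z'_{a-1}\) equal \(\tau-1\), and hence \(\rho_{\tau-1}(z')\ge a\). The value of \(\rho_{\tau-1}(z')\) may differ from \(r\), but only the comparison with \(a\) matters.
\end{itemize}
The inverse is then: add \(p_a\) if \(\rho_{\tau-1}(z')<a\), and add \(p_{a-1}\) otherwise. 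This is a left inverse, and finiteness of \(Q_6\) gives bijectivity.

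The main obstacle is exactly this recoverability of the case split from the image. The delicate points are the boundary case \(r=6\) with no coordinate equal to \(\tau\), and the \(r\ge a\) branch, where \(\rho\) is not preserved and only its comparison with \(a\) survives. I will handle both by the coordinatewise argument above.
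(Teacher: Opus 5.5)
Your proof is correct and follows the paper's argument. The label count is the same, and you make explicit that $r\ge1$, which the paper leaves implicit. The inverses are the same explicit ones, read off through the shifted threshold $\tau-1$; your criterion $\rho_{\tau-1}(z')<a \iff \rho_\tau(z)<a$ is exactly the paper's predicate on the first $a-1$ coordinates.

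The only cosmetic difference is that you verify a left inverse and invoke finiteness of $Q_6$. The paper instead checks that the candidate inverse is a right inverse for $M_\tau^\Delta$, and recovers a unique preimage coordinatewise for $M_\tau^a$. A small slip: you list four label sets, not five.
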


\begin{proof}
Let \(r=\rho_\tau(z)\).  The used labels are
\[
0,
\qquad r,
\qquad \{a-1:2\le a\le r\},
\qquad \{a:r<a\le6\},
\]
which are exactly \(0,1,\ldots,6\).

The map \(M_\tau^\Delta\) has inverse
\[
y\longmapsto y+p_{\rho_{\tau-1}(y)}.
\]
Indeed, if \(r=\rho_{\tau-1}(y)\) and \(z=y+p_r\), then \(\rho_\tau(z)=r\).  When \(r=6\), this convention covers both possibilities: first occurrence at coordinate \(6\), and no occurrence at all.  The two cases give the same map because both subtract \(p_6=(1,\ldots,1)\).  Thus \(M_\tau^\Delta(z)=y\).

For \(M_\tau^a\), \(2\le a\le6\), the inverse is also explicit.  Given \(y\), first set
\(z_i=y_i+1\) for \(i<a\).  The predicate ``some coordinate among the first \(a-1\) coordinates of
\(z\) equals \(\tau\)'' is equivalent to ``some coordinate among the first \(a-1\) coordinates of \(y\)
equals \(\tau-1\)''.  If this predicate holds, set \(z_a=y_a+1\); otherwise set \(z_a=y_a\).  All later
coordinates are unchanged.  This recovers a unique preimage, so \(M_\tau^a\) is bijective.
\end{proof}

We now prove the count criterion in a slightly stronger form.  For \(1\le r\le6\), let \(Q_r=(\mathbb Z_m)^r\) and define
\[
\rho^{(r)}_\tau(z)=
\begin{cases}
\min\{i:z_i=\tau\},&\text{if such }i\in\{1,\ldots,r\}\text{ exists},\\
r,&\text{otherwise}.
\end{cases}
\]
In dimension \(r\) we use the symbols
\[
\{0,\Delta,2,\ldots,r\}
\]
with the evident analogues of the maps above.  The induction step also requires one auxiliary symbol \(T\), denoting the full translation
\[
T_r(u)=u-(1,\ldots,1)
\]
on \(Q_r\), because, when projecting from dimension \(r\) to dimension \(r-1\), the top numeric symbol becomes a full translation.

\begin{lemma}[One-cycle skew-product criterion]\label{lem:skew-product}
Let \(X\) be a finite set, \(A:X\to X\) a single cycle of length \(|X|\), and let
\[
F:X\times\mathbb Z_m\to X\times\mathbb Z_m,
\qquad F(x,y)=(A(x),y+\phi(x)).
\]
Put
\[
S=\sum_{x\in X}\phi(x)\in\mathbb Z_m.
\]
Then \(F\) is a single cycle on \(X\times\mathbb Z_m\) if and only if \(S\) generates
\(\mathbb Z_m\), equivalently \(\gcd(S,m)=1\).
\end{lemma}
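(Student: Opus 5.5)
The plan is to reduce the cycle structure of \(F\) to that of its first-return map on the fiber \(\{x_0\}\times\mathbb Z_m\) over a fixed base point \(x_0\in X\). Put \(N=|X|\). Since \(A\) is a single cycle of length \(N\), every \(F\)-orbit meets this fiber, and the first return to it occurs after exactly \(N\) steps. By induction on \(k\) I would show
\[
F^k(x,y)=\Bigl(A^k(x),\ y+\sum_{j=0}^{k-1}\phi\bigl(A^j(x)\bigr)\Bigr).
\]
Taking \(k=N\) and \(x=x_0\), the points \(A^j(x_0)\) for \(0\le j<N\) run over all of \(X\) exactly once. Hence the return map is the translation \(y\mapsto y+S\) on \(\mathbb Z_m\).

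Next I would relate the cycles of \(F\) to the cycles of this return map. Every \(F\)-cycle passes through the fiber over \(x_0\), and it passes between consecutive fiber visits in exactly \(N\) steps. So the \(F\)-cycles are in bijection with the cycles of \(y\mapsto y+S\), and a return-map cycle of length \(\ell\) corresponds to an \(F\)-cycle of length \(N\ell\). This is the same cross-section argument already used in the proof of Theorem~\ref{thm:certificate-implies}. It follows that \(F\) is a single cycle on \(X\times\mathbb Z_m\) if and only if the translation \(y\mapsto y+S\) is a single \(m\)-cycle on \(\mathbb Z_m\).

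Finally, the orbit of \(0\) under \(y\mapsto y+S\) is the subgroup \(\langle S\rangle\). All orbits are cosets of \(\langle S\rangle\), which has order \(m/\gcd(S,m)\). So the translation is a single cycle exactly when \(\langle S\rangle=\mathbb Z_m\), that is, when \(\gcd(S,m)=1\).

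The main obstacle is only the bookkeeping in the bijection between \(F\)-cycles and return-map cycles. The point to make explicit is that a shared base point exists because \(A\) is transitive, and this transitivity is the only place where the single-cycle hypothesis on \(A\) is used.
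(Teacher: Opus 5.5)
Your proposal is correct and takes essentially the paper's route. The paper's proof also observes that one full turn of the base cycle shifts the fiber coordinate by \(S\), so every orbit of \(F\) has length \(|X|\) times the additive order \(m/\gcd(S,m)\) of \(S\); your version only spells out the first-return map on a fiber and the correspondence of cycles.
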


\begin{proof}
After one full turn of the base cycle, the second coordinate changes by \(S\).  Hence the orbit length
over a base cycle is multiplied by the additive order of \(S\) in \(\mathbb Z_m\).  This order is
\(m/\gcd(S,m)\).  Therefore the total orbit has size \(|X|m\) exactly when \(\gcd(S,m)=1\).
\end{proof}

\begin{lemma}[Prefix-count criterion]\label{lem:count-criterion}
Let \(W=(\xi_0,\ldots,\xi_{m-1})\) be a length-\(m\) word in \(\Sigma_6\).  For thresholds \(\tau_0,\ldots,\tau_{m-1}\), the associated return map is
\[
R_W=M_{\tau_{m-1}}^{\xi_{m-1}}\cdots M_{\tau_1}^{\xi_1}M_{\tau_0}^{\xi_0}.
\]
Let
\[
N_0,N_\Delta,N_2,N_3,N_4,N_5,N_6
\]
be the symbol counts of \(W\).  If
\[
\gcd(N_0,m)=1
\]
and
\[
\gcd(N_k-N_\Delta,m)=1\qquad(2\le k\le6),
\]
then the return map associated to \(W\) is a single \(m^6\)-cycle on \(Q_6\), for any choice of
thresholds in the layers.
\end{lemma}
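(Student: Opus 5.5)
The plan is to prove a stronger statement by induction on the dimension \(r\), in the setting introduced just before Lemma~\ref{lem:skew-product}. In dimension \(r\) we allow words in the extended alphabet \(\{0,\Delta,2,\ldots,r,T\}\) with arbitrary thresholds. The claim for each \(1\le r\le 6\) is:

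\begin{quote}
if \(\gcd(N_0,m)=1\) and \(\gcd(N_k-N_\Delta,m)=1\) for \(2\le k\le r\), then the return map on \(Q_r\) is a single \(m^r\)-cycle, for any value of the count \(N_T\).
\end{quote}

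The case \(r=6\) with \(N_T=0\) is exactly Lemma~\ref{lem:count-criterion}.

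\textbf{Step 1: projection.} Let \(\pi:Q_r\to Q_{r-1}\) forget \(z_r\). Because of the prefix structure, every layer map is compatible with \(\pi\), and we check how each symbol descends.
\begin{itemize}[label=\(\cdot\)]
\item For \(a\le r-1\), the conditions \(\rho^{(r)}_\tau<a\) and \(\rho^{(r-1)}_\tau<a\) agree on the first \(r-1\) coordinates, so \(M^a\) projects to \(M^a\).
\item \(M^\Delta\) projects to \(M^\Delta\). When \(\rho^{(r)}_\tau=r\) we also have \(\rho^{(r-1)}_\tau=r-1\), and \(p_r\) projects to \(p_{r-1}\), which is the full vector in dimension \(r-1\).
\item \(M^r\) projects to \(T_{r-1}\), since both \(p_r\) and \(p_{r-1}\) project to \((1,\ldots,1)\).
\item \(T\) projects to \(T\), and \(0\) projects to \(0\).
\end{itemize}
Hence the projected word has counts \(N_0,N_\Delta,N_2,\ldots,N_{r-1}\) unchanged and \(N_T'=N_T+N_r\). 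The induction hypothesis therefore shows that the projected return map \(R'\) is a single \(m^{r-1}\)-cycle.

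\textbf{Step 2: skew-product form.} The last coordinate changes by an amount depending only on the first \(r-1\) coordinates:
\begin{itemize}[label=\(\cdot\)]
\item \(0\) and \(a<r\) contribute \(0\);
\item \(T\) contributes \(-1\);
\item \(\Delta\) contributes \(-1\) exactly when the first \(r-1\) coordinates avoid \(\tau\);
\item \(M^r\) contributes \(-1\) exactly when some of the first \(r-1\) coordinates equals \(\tau\).
\end{itemize}
So \(R=(R',\,y+\phi(x))\), where \(\phi(x)\) is the total decrement along the \(m\) layers starting from \(x\). Lemma~\ref{lem:skew-product} then reduces everything to showing that \(S=\sum_{x\in Q_{r-1}}\phi(x)\) is a unit mod \(m\).

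\textbf{Step 3: computing \(S\).} This is the key computation. Since each projected layer map is a bijection of \(Q_{r-1}\) (Lemma~\ref{lem:prefix-layer} and its analogues, with \(T\) trivially bijective), the intermediate points at a fixed layer \(t\) run over all of \(Q_{r-1}\) exactly once as \(x\) varies. So layer \(t\) contributes minus the number of points of \(Q_{r-1}\) where its symbol decrements:
\begin{itemize}[label=\(\cdot\)]
\item a \(\Delta\) layer contributes \((m-1)^{r-1}\) decrementing points;
\item an \(r\) layer contributes \(m^{r-1}-(m-1)^{r-1}\);
\item a \(T\) layer contributes \(m^{r-1}\).
\end{itemize}
This count is independent of the thresholds. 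For \(r\ge2\) it gives
\[
S\equiv -\bigl(N_\Delta(m-1)^{r-1}-N_r(m-1)^{r-1}\bigr)\equiv \pm(N_r-N_\Delta)\pmod m,
\]
which is a unit by hypothesis.

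\textbf{Base case \(r=1\).} Here \(\rho^{(1)}\equiv1\), so \(\Delta\) and \(T\) both translate by \(-1\) and \(0\) is the identity. The return map is translation by \(-(m-N_0)\), which is an \(m\)-cycle iff \(\gcd(N_0,m)=1\). Tracking the \(T\)-count through the projections confirms that at \(r=1\) the non-\(0\) symbols number \(m-N_0\), as required.

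I expect the main obstacle to be Step 3. It needs the bijectivity of the layer maps to hold in every intermediate dimension, not just for \(Q_6\). It also needs care with the boundary conventions \(\rho=r\) versus ``no occurrence'', both in the decrement predicate and in showing that the \(\Delta\) map projects correctly.
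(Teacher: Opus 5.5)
Your proposal is correct and follows the paper's proof essentially step for step. It uses the same strengthened induction on \(r\) with the auxiliary translation symbol \(T\), the same projection sending the top numeric symbol \(r\) to \(T_{r-1}\), and the same threshold-independent drift count giving \(S\equiv (N_r-N_\Delta)(m-1)^{r-1}\pmod m\), which is then fed into Lemma~\ref{lem:skew-product}. The bijectivity you flag as the main obstacle in Step 3 is in fact automatic: the induction hypothesis makes the projected return map a permutation, so every partial composition of projected layer maps is injective, and hence bijective on the finite set \(Q_{r-1}\).
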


\begin{proof}
We prove the more general statement in dimension \(r\le6\), allowing the additional full-translation
symbol \(T\).  Let the word length be \(m\).  Let \(N_T\) be the number of occurrences of \(T\).  The
criterion is:
\[
\gcd(m-N_0,m)=1,
\qquad
\gcd(N_k-N_\Delta,m)=1\quad(2\le k\le r).
\]
Since \(m-N_0\equiv -N_0\pmod m\), this is the same first condition as in the statement.  The symbol
\(T\) is counted among the nonzero symbols in \(m-N_0\), but it does not appear in the later
difference conditions.

For \(r=1\), all nonzero symbols, including the auxiliary symbol \(T\) when it occurs, act as the translation \(x\mapsto x-1\) on \(\mathbb Z_m\).  Thus the
return map is translation by minus the number of nonzero symbols, and it is a single cycle exactly when
\(m-N_0\) is a unit modulo \(m\).

Assume the statement known in dimension \(r-1\).  Write
\[
Q_r=Q_{r-1}\times\mathbb Z_m,
\qquad z=(u,y).
\]
Project each layer map to the \(u\)-coordinate.  The symbols \(0,\Delta,2,\ldots,r-1\) project to the
corresponding symbols in dimension \(r-1\).  The top numeric symbol \(r\) projects to the full
translation \(T_{r-1}:u\mapsto u-(1,\ldots,1)\).  The full translation symbol \(T\), if present, also projects to
\(T\).  Therefore the projected word satisfies the induction hypotheses, because the first condition is
unchanged and the difference conditions for \(2\le k\le r-1\) are unchanged.  Hence the projected
return map on \(Q_{r-1}\) is one cycle.

It remains to compute the drift in the last coordinate over one base cycle.  We apply Lemma~\ref{lem:skew-product} with \(X=Q_{r-1}\), with \(A\) equal to the projected return map, and with \(\phi(u)\) equal to the accumulated increment of the last coordinate during one full length-\(m\) word starting from base state \(u\).  The total drift \(S=\sum_u\phi(u)\) is the sum of the following contributions.

Consider a fixed occurrence of \(\Delta\) in the word.  Immediately before this occurrence, the base state is obtained from the initial base state by a bijection of \(Q_{r-1}\).  Hence, as the initial base point ranges over one base cycle, the state seen at this occurrence ranges over all of \(Q_{r-1}\) exactly once.  The \(\Delta\)-map decreases the last coordinate exactly when that state has no coordinate equal to the threshold used in that layer.  This count is independent of the threshold: for any fixed \(\tau\in\mathbb Z_m\), the number of points of \(Q_{r-1}\) with no coordinate equal to \(\tau\) is
\[
(m-1)^{r-1}.
\]
Thus each occurrence of \(\Delta\) contributes \(-(m-1)^{r-1}\) to the total drift.

A fixed occurrence of the top numeric symbol \(r\) similarly sees every base state once.  It decreases the last coordinate exactly when at least one coordinate of the base state equals the current threshold.  Hence each occurrence of \(r\) contributes
\[
-\bigl(m^{r-1}-(m-1)^{r-1}\bigr).
\]
The lower numeric symbols \(2,\ldots,r-1\) do not change the last coordinate.  A full translation symbol \(T\) decreases the last coordinate for every \(u\), hence contributes \(-m^{r-1}\equiv0\pmod m\) to the drift.

Thus the total drift in the last coordinate is congruent modulo \(m\) to
\[
-N_\Delta(m-1)^{r-1}-N_r\bigl(m^{r-1}-(m-1)^{r-1}\bigr)
\equiv (N_r-N_\Delta)(m-1)^{r-1}.
\]
Because \((m-1)^{r-1}\) is a unit modulo \(m\), Lemma~\ref{lem:skew-product} says that the final
fiber is one cycle exactly when \(N_r-N_\Delta\) is a unit modulo \(m\).  This is the required last
condition.  The induction is complete.
\end{proof}

A count matrix has rows indexed by colors and columns ordered as
\[
(0,\Delta,2,3,4,5,6).
\]
If all row sums and column sums are \(m\), form the bipartite multigraph whose left vertices are the seven colors, whose right vertices are the seven symbols, and whose edge multiplicities are the entries of the matrix.  This graph is \(m\)-regular and bipartite.  By Hall's theorem, equivalently by K\H{o}nig's line-coloring theorem \cite{Konig}, it has a perfect matching; removing that matching and repeating gives a decomposition into \(m\) perfect matchings.  Each perfect matching is one layer.

The matrices below were obtained by a constrained search over nonnegative integer entries: row and column sums are fixed at \(m\), the bottom row is constrained to have \(N_\Delta=0\) so that one row absorbs the large \(N_0\) value, and the remaining entries are searched until all unit conditions of Lemma~\ref{lem:count-criterion} hold simultaneously.  The three parametric families reflect the residue class of \(m\) modulo \(6\).  The case \(m=7\) is written separately because the \(6s+1\) family requires \(s\ge2\) for nonnegativity.

\subsection{The matrix for \texorpdfstring{\(m=7\)}{m=7}}

\[
\begin{pmatrix}
1&2&0&0&0&0&4\\
1&2&0&0&0&3&1\\
1&1&0&0&3&2&0\\
1&1&0&3&2&0&0\\
1&1&3&2&0&0&0\\
1&0&2&1&1&1&1\\
1&0&2&1&1&1&1
\end{pmatrix}.
\]
All row and column sums equal \(m=7\).  The values \(N_0\) are all \(1\), hence coprime to \(7\), and each difference \(N_k-N_\Delta\) is a unit modulo \(7\).  Concretely, the first row corresponds to color \(0\): it uses the symbol \(0\) once, the symbol \(\Delta\) twice, no occurrences of symbols \(2\) through \(5\), and the symbol \(6\) four times; its row sum is \(1+2+0+0+0+0+4=7=m\).  A perfect-matching decomposition realizes these row counts as an actual seven-layer schedule \(\mu_t\), with each layer using every symbol exactly once.

\subsection{The matrix for \texorpdfstring{\(m=6s+1\), \(s\ge2\)}{m=6s+1, s>=2}}

\[
\begin{pmatrix}
1&s+1&s-1&s-1&s-1&s-1&s+3\\
1&s+1&s-1&s-1&s-1&s-1&s+3\\
1&s+1&s-1&s-1&s-1&s+2&s\\
1&s&s+1&s+1&s+1&s-1&s-2\\
2&s-1&s&s&s+1&s+1&s-2\\
2&s-1&s+1&s+1&s&s&s-2\\
6s-7&0&2&2&2&1&1
\end{pmatrix}.
\]
All row and column sums equal \(m=6s+1\).  The values \(N_0\) are \(1\), \(2\), and \(m-8\), all coprime to \(m\) since \(m\) is odd and \(\gcd(m-8,m)=\gcd(8,m)=1\).  Each difference \(N_k-N_\Delta\) is in \(\{\pm1,\pm2\}\), hence is coprime to \(m\).

\subsection{The matrix for \texorpdfstring{\(m=6s+3\), \(s\ge1\)}{m=6s+3, s>=1}}

\[
\begin{pmatrix}
1&s+2&s&s&s&s&s\\
1&s+2&s&s&s&s&s\\
1&s+2&s&s&s&s&s\\
1&s-1&s&s&s+1&s+1&s+1\\
2&s-1&s&s&s&s+1&s+1\\
2&s-1&s+1&s+1&s&s&s\\
6s-5&0&2&2&2&1&1
\end{pmatrix}.
\]
All row and column sums equal \(m=6s+3\).  The \(N_0\) values are \(1\), \(2\), and \(m-8\), all coprime to \(m\) since \(m\) is odd and \(\gcd(m-8,m)=\gcd(8,m)=1\).  Each difference \(N_k-N_\Delta\) is in \(\{\pm1,\pm2\}\), hence is coprime to \(m\).

\subsection{The matrix for \texorpdfstring{\(m=6s+5\), \(s\ge1\)}{m=6s+5, s>=1}}

\[
\begin{pmatrix}
1&s+2&s&s&s&s+1&s+1\\
1&s+2&s&s&s&s+1&s+1\\
1&s+2&s&s&s&s+1&s+1\\
1&s&s+1&s+1&s+1&s-1&s+2\\
2&s&s+1&s+1&s+1&s+1&s-1\\
2&s-1&s+1&s+1&s+1&s+1&s\\
6s-3&0&2&2&2&1&1
\end{pmatrix}.
\]
All row and column sums equal \(m=6s+5\).  The \(N_0\) values are \(1\), \(2\), and \(m-8\), all coprime to \(m\) since \(m\) is odd and \(\gcd(m-8,m)=\gcd(8,m)=1\).  Each difference \(N_k-N_\Delta\) is in \(\{\pm1,\pm2\}\), hence is coprime to \(m\).

\begin{proposition}[Uniform prefix-count proof for \(m\ge7\)]\label{prop:mge7}
If \(m\ge7\) is odd, then \(D_7(m)\) has a Hamilton decomposition.
\end{proposition}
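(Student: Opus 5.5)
The plan is to build a valid $(7,m)$-root-flat certificate and then invoke Theorem~\ref{thm:certificate-implies}. Fix odd $m\ge7$ and write $m=7$, $m=6s+1$ with $s\ge2$, $m=6s+3$ with $s\ge1$, or $m=6s+5$ with $s\ge1$. Take the corresponding count matrix $C$ from the preceding subsections; its rows are colors $\kappa\in\mathbb Z_7$ and its columns are the symbols $(0,\Delta,2,\ldots,6)$. First check that every entry is nonnegative. For $m=6s+1$ the smallest entries are $s-2\ge0$ and $6s-7\ge5$, and in the other families all entries are at least $s-1\ge0$. Then check that all row and column sums equal $m$. This is a linear identity in $s$ for each family.

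Next, apply K\H{o}nig's theorem to the $m$-regular bipartite multigraph of $C$. This decomposes it into $m$ perfect matchings $\mu_0,\ldots,\mu_{m-1}$, where each $\mu_t$ is a bijection from colors to $\Sigma_6$. Layer $t$ assigns symbol $\mu_t(\kappa)$ to color $\kappa$, and we fix any thresholds $\tau_t$, for instance $\tau_t=0$. In prefix coordinates, color $\kappa$ at layer $t$ then applies $M_{\tau_t}^{\mu_t(\kappa)}$, which subtracts $p_r$ for the label $r$ given by Table~\ref{tab:symbol-label}. Transport this through $\Phi$ and the dictionary of Table~\ref{tab:prefix-dictionary}, which says label $r$ corresponds to direction $6-r$. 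This gives the certificate
\[
d_t(w,\kappa)=6-r_t(\Phi(w),\kappa),
\]
where $r_t(z,\kappa)$ is the prefix label used by symbol $\mu_t(\kappa)$ at state $z$. By construction $\Phi\circ P_{t,\kappa}\circ\Phi^{-1}=M_{\tau_t}^{\mu_t(\kappa)}$.

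Now verify the three certificate conditions. (RF1) holds because $\mu_t$ is a bijection onto the seven symbols, and Lemma~\ref{lem:prefix-layer} says the seven symbols use the labels $0,\ldots,6$ exactly once at each state. The map $r\mapsto6-r$ preserves this property. (RF2) holds because each $M_\tau^\xi$ is a bijection by Lemma~\ref{lem:prefix-layer}, and $P_{t,\kappa}$ is conjugate to it by $\Phi$. For (RF3), the return map $R_\kappa$ is conjugate by $\Phi$ to $R_W$, where $W$ is the word $(\mu_0(\kappa),\ldots,\mu_{m-1}(\kappa))$. The symbol counts of $W$ are exactly row $\kappa$ of $C$. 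Lemma~\ref{lem:count-criterion} then reduces (RF3) to two unit conditions: $\gcd(N_0,m)=1$ and $\gcd(N_k-N_\Delta,m)=1$ for $2\le k\le6$.

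The main work is checking these unit conditions row by row in each family. The $N_0$ entries are $1$, $2$, or $m-8$. Oddness of $m$ gives $\gcd(2,m)=1$ and $\gcd(m-8,m)=\gcd(8,m)=1$. Note that $m-8\ge1$ because $m\ge9$ in the parametric families; the case $m=7$ is the separate matrix, where all $N_0=1$. The differences $N_k-N_\Delta$ must be computed explicitly and shown to lie in $\{\pm1,\pm2\}$, or to be units modulo $7$ for the $m=7$ matrix. In the bottom rows $N_\Delta=0$, so the differences are the entries $1$ and $2$ directly.

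I expect this bookkeeping to be the main obstacle, and the place to be careful. Every one of the $7\times5$ differences in each family must be checked to be nonzero and coprime to $m$. For example, in the $6s+1$ family the top rows give $(s-1)-(s+1)=-2$ and $(s+3)-(s+1)=2$, both units since $m$ is odd. Once this is done, Theorem~\ref{thm:certificate-implies} yields the Hamilton decomposition of $D_7(m)$.
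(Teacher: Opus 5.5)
Your proposal is correct and follows the paper's proof essentially step for step. You decompose the count matrix into $m$ perfect matchings by K\H{o}nig's theorem, set $d_t(w,\kappa)=6-r$ through $\Phi$, obtain (RF1)--(RF2) from Lemma~\ref{lem:prefix-layer} and (RF3) from Lemma~\ref{lem:count-criterion}, and conclude with Theorem~\ref{thm:certificate-implies}. The only difference is that you fold into the proof the checks of nonnegativity, row and column sums, and the unit conditions, which the paper records beside each matrix; the constant bottom-row entries $0,1,2$ are not $\ge s-1$ for large $s$ as you state, but they are trivially nonnegative, and all the differences $N_k-N_\Delta$ do lie in $\{\pm1,\pm2\}$.
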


\begin{proof}
Use the appropriate count matrix above and decompose its bipartite multigraph into \(m\) perfect matchings.  Write the resulting layer schedule as
\[
\mu_t:\mathbb Z_7\to\Sigma_6,
\qquad t\in\mathbb Z_m,
\]
where \(\mu_t(\kappa)\) is the prefix symbol assigned to color \(\kappa\) in layer \(t\).  Fix, for instance, the threshold \(\tau_t=0\) in every layer; the count criterion allows arbitrary thresholds.

For \(w\in A_{7,m}\), put \(z=\Phi(w)\).  The prefix map \(M_{\tau_t}^{\mu_t(\kappa)}\) sends \(z\) to \(z-p_r\) for a uniquely determined label \(r\in\{0,\ldots,6\}\).  Define the root-flat direction by
\[
d_t(w,\kappa)=6-r.
\]
By the prefix/root-flat dictionary above,
\[
\Phi(w+q_{d_t(w,\kappa)})=M_{\tau_t}^{\mu_t(\kappa)}(\Phi(w)).
\]
Thus the layer maps constructed from the schedule are exactly of the root-flat form required in Definition~\ref{def:root-flat-certificate}.

Lemma~\ref{lem:prefix-layer} proves row Latinness and layer bijectivity.  Lemma~\ref{lem:count-criterion} proves that every color return is a single \(m^6\)-cycle.  The certificate theorem then gives a Hamilton decomposition of \(D_7(m)\).
\end{proof}

\section{The boundary moduli \texorpdfstring{\(m=3,5\)}{m=3,5}}\label{sec:exceptional}

\begin{proposition}[Boundary obstruction for the prefix-count family]\label{prop:boundary-obstruction}
The prefix-count family of Section~\ref{sec:prefix-count} cannot prove \(D_7(3)\) or \(D_7(5)\).
Here a prefix-count schedule means any schedule obtained from the seven-symbol layer factorization of Lemma~\ref{lem:prefix-layer}.  More precisely, when \(m<7\), no seven-color prefix-count schedule can make every return word primitive.
\end{proposition}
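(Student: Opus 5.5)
The plan is a counting argument on the symbol $0$ alone. We do not need the full necessity of Lemma~\ref{lem:count-criterion}, only the necessity of its first condition $\gcd(N_0,m)=1$. We then combine it with the fact that each layer of a prefix-count schedule uses every symbol of $\Sigma_6$ exactly once across the seven colors.

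\textbf{Step 1 (projection to the first prefix coordinate).} Consider the map $\pi:Q_6\to\mathbb Z_m$, $\pi(z)=z_1$, and check how each layer map acts on $z_1$.
\begin{itemize}
\item $M_\tau^0$ fixes $z_1$.
\item $M_\tau^\Delta$ subtracts $p_{\rho_\tau(z)}$, and $\rho_\tau(z)\in\{1,\dots,6\}$, so $z_1$ drops by exactly $1$.
\item For $2\le a\le 6$, $M_\tau^a$ subtracts $p_a$ or $p_{a-1}$, with $a-1\ge1$, so again $z_1$ drops by exactly $1$.
\end{itemize}
This holds for every threshold. Hence $\pi\circ R_W=\pi-(m-N_0)=\pi+N_0$ for any return word $W$ and any choice of thresholds. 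So $\pi$ intertwines $R_W$ with the translation $y\mapsto y+N_0$ on $\mathbb Z_m$.

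\textbf{Step 2 (necessity of $\gcd(N_0,m)=1$).} Suppose $R_W$ is a single $m^6$-cycle on $Q_6$. The image under $\pi$ of its unique orbit is a single orbit of the translation by $N_0$. Since $\pi$ is surjective, that orbit is all of $\mathbb Z_m$, so $N_0$ generates $\mathbb Z_m$. In particular $N_0\not\equiv0\pmod m$. Because $0\le N_0\le m$, this forces $1\le N_0\le m-1$, since $N_0=m$ gives $\gcd(m,m)=m\neq1$ for $m\ge2$. By Lemma~\ref{lem:prefix-layer} and Definition~\ref{def:root-flat-certificate}, primitivity of the root-flat return map $R_\kappa$ is the same as primitivity of the prefix return map, because $\Phi$ conjugates one to the other. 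So every color whose return is primitive has $N_0(\kappa)\ge1$.

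\textbf{Step 3 (column count).} In a prefix-count schedule, each of the $m$ layers assigns the seven symbols of $\Sigma_6$ bijectively to the seven colors. Therefore the symbol $0$ is used exactly once per layer, and
\[
\sum_{\kappa\in\mathbb Z_7}N_0(\kappa)=m.
\]
If all seven returns were primitive, Step~2 would give $\sum_\kappa N_0(\kappa)\ge7$, hence $m\ge7$. For $m<7$, and in particular for $m=3,5$, at least one color has $N_0(\kappa)=0$. For that color $\pi\circ R_\kappa=\pi$, so $R_\kappa$ preserves every fiber $\{z_1=c\}$ and has at least $m$ cycles. This proves the proposition.

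The only step needing care is Step~1: verifying that every nonzero symbol decreases $z_1$ by exactly one, uniformly in the state and the threshold. The point to check is that $\rho_\tau$ never takes the value $0$, which holds because it is defined as a minimum over indices $1,\dots,6$ with default value $6$. The rest is pigeonhole.
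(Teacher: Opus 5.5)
Your proof is correct and follows essentially the same route as the paper: you project the return map to the first prefix coordinate, where every nonzero symbol subtracts $1$ (because $\rho_\tau\ge1$), so the projected map is the translation $y\mapsto y-(m-N_0)$. Primitivity then forces $\gcd(N_0,m)=1$ and hence $N_0\ge1$ for each color, which contradicts the fact that the $0$-symbol occurs exactly once per layer and thus exactly $m<7$ times in total.
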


\begin{proof}
Fix a color \(\kappa\) and consider its return word in the prefix-count construction.  Project its return map to the first coordinate of \(Q_6\).  The \(0\)-symbol fixes the first coordinate, while every other symbol \(\Delta,2,3,4,5,6\) decreases it by \(1\).  Thus the first-coordinate factor of the return map is the translation
\[
y\longmapsto y-(m-N_0).
\]
If the full return map on \(Q_6\) is a single cycle, its projection to the first coordinate must be transitive on \(\mathbb Z_m\).  Hence this translation must be an \(m\)-cycle, which is equivalent to
\[
\gcd(m-N_0,m)=1,
\]
or equivalently \(\gcd(N_0,m)=1\).  In particular \(N_0\ge1\) for every primitive color.

Seven colors therefore require at least seven occurrences of the \(0\)-symbol across all layers.  But each prefix-count layer uses each symbol exactly once, so across all layers the total number of \(0\)-symbols is exactly \(m\).  If \(m<7\), this is impossible.
\end{proof}

The obstruction is sharp: it identifies exactly which odd moduli lie outside the parameter range of the prefix-count construction.  The point is not that \(D_7(3)\) or \(D_7(5)\) is intrinsically harder as a graph; rather, these moduli lie outside the parameter range of the prefix-count construction.  This motivates the finite certificates below.

\begin{theorem}[Finite boundary certificates]\label{thm:finite-exceptional}
There are valid \((7,3)\)- and \((7,5)\)-root-flat certificates.
\end{theorem}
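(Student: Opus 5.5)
The plan is to exhibit explicit certificate tables $d_t:A_{7,m}\times\mathbb Z_7\to\mathbb Z_7$ for $m=3$ and $m=5$, and to verify (RF1)--(RF3) by finite computation. The certificate is checked only on $A_{7,m}$, so the state spaces are small: $3^6=729$ points for $m=3$ and $5^6=15625$ points for $m=5$. Theorem~\ref{thm:certificate-implies} then lifts each valid certificate to a Hamilton decomposition.

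To keep the search small, I would not search over arbitrary tables. Instead I would reuse the one-layer machinery wherever possible.

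\textbf{Layer structure.} Most layers are taken to be prefix-count layers of Lemma~\ref{lem:prefix-layer}, transported through $\Phi$. Row Latinness and layer bijectivity are then automatic for those layers.

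\textbf{Breaking the obstruction.} Proposition~\ref{prop:boundary-obstruction} shows that at least one layer must deviate. Its obstruction comes entirely from the first prefix coordinate: only the $0$-symbol fixes $z_1$, so only $m$ zeros are available for seven colors. I would therefore allow one or two \emph{nonconstant} layers. In such a layer the assignment of the direction $e_6$ (prefix label $0$) to colors depends on the state $w$, for instance through a Latin-square-type table indexed by a few coordinates of $w$. This is the zero-set Latin table idea from the odd $D_5$ proof. These special layers must still satisfy (RF1) and (RF2). I would enforce (RF2) by making each color's map a piecewise translation whose pieces are permuted, and then check injectivity of each $P_{t,\kappa}$ directly on the finite set.

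\textbf{Search and verification.} The remaining freedom consists of the symbol schedule in the ordinary layers, the thresholds $\tau_t$, and the table in the special layer. I would search over it with a SAT/exact-cover formulation for (RF1)--(RF2), then filter candidates by computing each $R_\kappa$ and testing whether it is a single cycle of length $m^6$.

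For primitivity I would use a heuristic guide before brute cycle tracing. Projecting to prefix coordinates and applying the skew-product criterion, Lemma~\ref{lem:skew-product}, coordinate by coordinate gives necessary drift conditions modulo $m$. These prune the search, and the final acceptance is then an exact orbit computation.

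The chosen tables are recorded as data. Verification of (RF1), (RF2), and single-cycle (RF3) for all seven colors is a decidable finite check, which I would carry out in Lean 4 by \texttt{decide}/native evaluation over the explicit tables.

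The main obstacle I expect is (RF3) for all seven colors simultaneously at $m=3$. With so few layers, the freedom to correct drifts is tiny. Each color needs a nonzero net effect on every fiber of the prefix tower, yet the special layer must feed the $0$-direction to seven colors using only three layers. If a single nonconstant layer fails, I would first make two layers nonconstant. If that also fails, I would drop the prefix-layer ansatz entirely for $m=3$ and search all (RF1)--(RF2) tables randomized with local repair, since 729 states still make this feasible.
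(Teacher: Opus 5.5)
Your plan follows the paper's computer-assisted route: explicit tables on \(A_{7,3}\) and \(A_{7,5}\) are found by an exact-cover search for (RF1)--(RF2), followed by a return-cycle search for (RF3). Existence then rests only on a Lean checker verifying (RF1)--(RF3) on the recorded data, so your prefix-layer hybrid ansatz, as opposed to the paper's zero-set selectors, is just a different search space. One caveat: the proof is complete only once a certificate is actually produced. The paper has it as ancillary Lean files, whereas your hedged plan does not show that its search succeeds.
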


\begin{proof}[Computer-assisted proof]
The certificates are not printed in this paper.  The boundary return tables have \(729\) and \(15625\) root-flat states respectively, and the full color-layer direction data are too large for inline display in this paper.  The \(m=3\) return table is small enough to print in principle, but we keep both boundary moduli in the same ancillary format so that the paper and the formalization use one certificate interface.  They are
stored as ancillary source files in the Lean formalization repository \cite{TorusProgram}.  The certificates were produced in two stages.  First, a finite exact-cover search over candidate root-flat zero-set selectors identifies selector layers compatible with the row-Latin and layer-bijection conditions.  Second, a return-cycle search over the resulting candidate schedules locates one whose seven color returns are all single cycles.  The proof relies only on the checker, not on the search: given the certificates, the three conditions of Definition~\ref{def:root-flat-certificate} are verified directly.

Concretely, for \(m=3\) the checker enumerates \(A_{7,3}\), verifies the row-Latin and layer-bijection
conditions, and verifies that the seven color returns are single cycles of length \(3^6=729\).  For
\(m=5\), it performs the same check on \(A_{7,5}\), where the return cycles have length
\(5^6=15625\).  The finite certificates are type-checked in the Lean 4 development.  The source bundle records the D7 proof in \texttt{D7Odd/Handoff/CanonicalFamily.lean}, \texttt{D7Odd/Torus.lean}, and \texttt{D7Odd/Cayley.lean}; the top-level Cayley theorem is
\[
\texttt{D7Odd.D7\_odd\_cayley\_unconditional}.
\]
The artifact uses Lean \texttt{v4.30.0-rc2} and mathlib \texttt{v4.30.0-rc2}.  The recorded build command is
\[
\texttt{lake build D5Odd D7Odd}.
\]
The top-level theorem has the interface
\[
\texttt{theorem D7Odd.D7\_odd\_cayley\_unconditional \{m : Nat\} [NeZero m]}
\]
\[
\texttt{(hodd : Odd m) (hm3 : 3 <= m) :}
\]
\[
\texttt{D7Odd.CayleyHamiltonDecompositionD7 m}.
\]
The boundary branch is assembled in the handoff files from the finite certificate objects for \(m=3\) and \(m=5\), including the small root-flat certificates and the corresponding small Hamilton return checks.  The relevant files are
\begin{center}
\begin{tabular}{l}
\texttt{D7Odd/Handoff/SmallBranches.lean},\\
\texttt{D7Odd/Handoff/SmallRank3Certificates.lean},\\
\texttt{D7Odd/Handoff/SmallRank5Certificates.lean}.
\end{tabular}
\end{center}
For the boundary moduli, hand verification of seven \(5^6=15625\)-cycles is impractical; the Lean type checker takes the role of the printed finite certificate.
\end{proof}

\begin{corollary}\label{cor:exceptional}
The directed tori \(D_7(3)\) and \(D_7(5)\) have Hamilton decompositions.
\end{corollary}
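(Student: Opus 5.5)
This follows by combining Theorem~\ref{thm:finite-exceptional} with Theorem~\ref{thm:certificate-implies}. For \(m\in\{3,5\}\), Theorem~\ref{thm:finite-exceptional} provides a valid \((7,m)\)-root-flat certificate, meaning direction functions \(d_t:A_{7,m}\times\mathbb Z_7\to\mathbb Z_7\) for \(t\in\mathbb Z_m\) that satisfy (RF1)--(RF3). Theorem~\ref{thm:certificate-implies} holds for arbitrary \(n\ge2\) and \(m\ge2\), so I would apply it with \(n=7\) and \(m=3\) or \(m=5\).

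The color classes are the arc sets
\[
E_\kappa=\bigl\{(x,\,x+e_{d_{\sigma(x)}(\iota_{\sigma(x)}x,\kappa)})\bigr\}.
\]
By (RF1) they partition the Cayley arcs. By (RF2) each \(E_\kappa\) is a directed \(1\)-factor. By (RF3) each return map \(R_\kappa\) is a single cycle of length \(m^6\), so each \(E_\kappa\) is a single directed cycle of length \(m^7\). These seven Hamilton cycles therefore decompose \(D_7(3)\) and \(D_7(5)\).

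The only real difficulty lies upstream, in the validity of the finite certificates, and that has already been established by the Lean check cited in Theorem~\ref{thm:finite-exceptional}. Proposition~\ref{prop:boundary-obstruction} shows that Proposition~\ref{prop:mge7} cannot be used here instead, so relying on the certificates is necessary. Given both theorems, the deduction is immediate.
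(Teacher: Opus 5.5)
Your proposal is correct and matches the paper's proof: the paper simply applies Theorem~\ref{thm:certificate-implies} with \(n=7\) to the certificates of Theorem~\ref{thm:finite-exceptional}, and your recap of the lift is accurate though not needed. One small quibble: Proposition~\ref{prop:boundary-obstruction} rules out only the prefix-count method, not every alternative to finite certificates, so ``necessary'' overstates it; this remark plays no role in the deduction.
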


\begin{proof}
Apply Theorem~\ref{thm:certificate-implies} to the certificates of
Theorem~\ref{thm:finite-exceptional}.
\end{proof}

\section{Proof of the main theorem}

\begin{proof}[Proof of Theorem~\ref{thm:main}]
Let \(m\ge3\) be odd.  If \(m=3\) or \(m=5\), the result follows from
Corollary~\ref{cor:exceptional}.  If \(m\ge7\), it follows from Proposition~\ref{prop:mge7}.  These cases
exhaust all odd \(m\ge3\).
\end{proof}

\section{Discussion}

The odd \(D_5(m)\) proof and the present \(D_7(m)\) proof share the same root-flat philosophy: local
arc partition and matching are separated from the primitive return-map problem.  The difference is that
for \(D_7\), the large-modulus branch is not another isolated zero-set table.  It is the uniform
prefix-count construction of Section~\ref{sec:prefix-count}.  This gives a uniform proof for all
odd \(m\ge7\), while Proposition~\ref{prop:boundary-obstruction} precisely identifies why the two smaller
odd moduli must be treated separately.

The finite part also changes character from \(D_5\) to \(D_7\).  In the \(D_5\) odd proof, the exceptional
\(m=3\) return cycle has length \(81\), small enough to print as a table.  In \(D_7\), the boundary
returns have lengths \(729\) and \(15625\).  This is the point at which paper-only finite verification
becomes unnatural: the certificate is still finite and elementary, but the finite part is best reviewed together with the ancillary formal artifact.  The Lean type checker plays the same role as the printed \(81\)-cycle table in the
\(D_5\) paper, but at a scale where hand verification of seven \(5^6=15625\)-cycles is no longer reasonable.

The next prime dimension \(D_{11}\) presents two distinct difficulties beyond what the present method handles.  First, a direct zero-set certificate in
\(D_{11}\) would involve exact-cover constraints over subsets of \(\mathbb Z_{11}\), far beyond the
\(D_5\) and \(D_7\) tables.  Second, the primitive return-map proof requires nested first-return sections at multiple scales or a finer rank invariant; a single first-return section of \(D_5\) type no longer suffices.  Composite dimensions remain governed by the diagonal Cartesian-power lift
\cite{ParkComposite}; prime dimensions require independent base constructions.  The root-flat certificate framework gives a common language for such base constructions, even when the selector tables and return-map ranks become more elaborate.

\section*{Acknowledgments and AI disclosure}

This paper was prepared with the help of large language models for exploration and exposition, and a Lean 4 development for formal verification.  Anthropic Claude Opus 4.7
contributed to exposition refinement and to exploratory discussion of the proof structure.  OpenAI GPT-5.5 Pro played a substantial role in the exploration of count-matrix designs, exceptional-certificate construction, proof strategy, and exposition refinement.  OpenAI
GPT-5.5 Codex contributed to the Lean 4 formalization workflow.  The mathematical content of the
paper, including the prefix-count criterion and the validity of the finite certificates, is verified by the
author; the Lean development provides independent machine verification of the finite certificates and of
the final Cayley statement.

\appendix

\section{Ancillary certificate format}\label{app:certificate-format}

For each \(m\in\{3,5\}\), the ancillary certificate contains a table
\[
d_t:A_{7,m}\times\mathbb Z_7\to\mathbb Z_7,
\qquad t\in\mathbb Z_m.
\]
The checker constructs
\[
P_{t,\kappa}(w)=w+q_{d_t(w,\kappa)}
\]
and verifies:
\begin{enumerate}[label=(C\arabic*)]
\item For each \((t,w)\), the list \((d_t(w,0),\ldots,d_t(w,6))\) is a permutation of \(\mathbb Z_7\).
\item For each \((t,\kappa)\), the list \(P_{t,\kappa}(w)\) over all \(w\in A_{7,m}\) is a permutation
of \(A_{7,m}\).
\item For each \(\kappa\), the return permutation
\[
P_{m-1,\kappa}\cdots P_{0,\kappa}
\]
is a single cycle of length \(m^6\).
\end{enumerate}
The Lean files implementing this verification are part of the ancillary source bundle described in
\cite{TorusProgram}.  The repository records Lean \(4.30.0\)-rc2, mathlib \(4.30.0\)-rc2, and the build
command \texttt{lake build D5Odd D7Odd}.

\end{document}